\newtheorem{theorem}{Theorem}
\newtheorem{example}[theorem]{Example}
\newtheorem{lemma}[theorem]{Lemma}
\begin{document}

\title{A Variant on the Feline Josephus Problem}
\author{Erik Insko and Shaun Sullivan}
\begin{abstract}
 In the Feline Josephus problem, soldiers stand in a circle, each having $\ell$ `lives'. Going around
the circle, a life is taken from every $k$th soldier; soldiers with 0 lives remaining are removed
from the circle. Finding the last surviving soldier proves to be an interesting and difficult
problem, even in the case when $\ell=1$. In our variant of the Feline Josephus problem, we
instead remove a life from $k$ consecutive soldiers, and skip 1 soldier. In certain cases, we find closed
formulas for the surviving soldier and hint at a way of finding such solutions in other cases.
\end{abstract}

\maketitle

\section{  History of Josephus Problem Variants}
The Josephus problem is named after Flavius Josephus, a Jewish soldier and historian living in the 1st 
century. According to Josephus' account of the siege of Jotapata, he and his 39 comrade soldiers were trapped in a cave, the exit of which was blocked by Romans. They chose suicide over capture and decided to form a circle and kill themselves by having every third soldier killed by his neighbor. 
Josephus states that by luck or maybe by the hand of God (modern scholars point out that Josephus was a well-educated scholar and predicted the outcome), he and another man remained the last two standing and gave up to the Romans
\cite{Jos70}.

Thus the original Josephus problem is to identify the last remaining soldier in the following elimination game, or more generally the order in which soldiers are eliminated from the game. 
A finite set of $n$ labeled soldiers are arranged in a circle 
and a skip length $k$ is set. 
In each round of the game $k$ consecutive soldiers are skipped 
and the $(k+1)$st soldier is eliminated.  To make the game as concrete as possible we assume that at each second a new soldier is either skipped or eliminated; thus each elimination round takes $k+1$ seconds, and we can talk about the time at which a soldier leaves the game.

The Josephus Problem and its variants have received considerable attention 
in recent years \cite{AF48, BS12, HH97,Jak73, Llo83,MW10, MNW07,OW91,Q11,Rob60, RW12,U05,U07,WW13,Woo73}.
For instance, in 1990 Odlyzko and Wilf gave an ``explicit-looking'' formula for  the 
survivor in a generalized Josephus problem where $k=2$ soldiers are skipped at a time.  
 In 2007 Matsumoto, Nakamigawa, and  Watanabe studied the \textit{Switchback Josephus Problem,} 
where the soldiers stand in a line, and every $k$th soldier is eliminated passing from left to right and back again
\cite{MNW07}.
Ruskey and Williams generalized the Josephus problem to the \emph{Feline Josephus Problem} by 
 introducing a uniform number of lives $\ell$, 
 so that elements are not eliminated until they have been selected for the $\ell$th 
 time. They proved two main results: 
The position of the surviving soldier stabilizes with respect to 
 increasing the number of lives, and when the number of soldiers $n$ and a soldier's position $j$ are fixed, there is a skip value $k$ that allows soldier $j$ to 
 be the survivor  for every value of $\ell$  \cite{RW12}.
In 2012, the second author and Beatty considered the \textit{Texas Chainsaw Josephus Problem} where all soldiers have one life, $k$ consecutive soldiers are eliminated and one is skipped \cite{BS12}; their main result is restated as Theorem \ref{onelife} in Section 
\ref{section2} below, and we extend this result by determining the order in which soldiers are eliminated from the Texas Chainsaw Josephus Game.  
In Section \ref{section3}, we consider 
the \textit{Feline Texas Chainsaw Josephus Problem} where each soldier has $\ell$ lives, similar to Ruskey and Williams' generalization \cite{RW12}.

\section{Texas Chainsaw Josephus Problem} \label{section2}

This section gives two results on a Josephus problem variant where $k$ consecutive soldiers are killed at a time,
and the next soldier is skipped. 

We will label the $n$ soldiers $0$ through $n-1$. 
Let $T(n,k)$ denote the remaining soldier at the end of the Texas Chainsaw Josephus Game.
For example, for  skip value $k=2$ and $n=10$ soldiers, the elimination order is 1, 2, 4, 5, 7, 8, 0, 3, 9; thus the sixth soldier would survive and $T(10,2)=6$. 
The following theorem gives the surviving soldier in this game.
\begin{theorem}\label{onelife} \cite[Corollary 5.1]{BS12}
If $n=a(k+1)^b+km$, where $n\equiv a\mod{k}$, $1\leq a\leq k$, and $b$ is as large as possible, then

\[
T(n,k)=(k+1)m.
\]
\end{theorem}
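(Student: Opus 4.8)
The plan is to pin down the game's dynamics, extract a recursion for $T(n,k)$ by examining one lap around the circle, and then verify the closed form by strong induction on $n$.

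First, fix conventions. Label the soldiers $0,\dots,n-1$ and picture a cursor moving cyclically, always stepping over soldiers already removed; the action applied to the successive positions it visits follows the periodic pattern $S\,K^{k}\,S\,K^{k}\cdots$, so soldier $0$ is skipped, soldiers $1,\dots,k$ are killed, soldier $k+1$ is skipped, and so on. (This reproduces the stated value $T(10,2)=6$; it is just ``kill $k$, skip $1$'' with the count beginning just after soldier $0$.) More generally, for $0\le p\le k$ let $U(m,p,k)$ be the survivor's label when the action pattern is instead $K^{p}\,S\,K^{k}\,S\,K^{k}\cdots$. Then $U(m,0,k)=T(m,k)$, and since after the first $p$ kills one is left with a fresh game on soldiers $p,\dots,m-1$,
\[
U(m,p,k)=p+T(m-p,k)\quad(1\le p\le m-1),\qquad U(m,p,k)=m-1\quad(p\ge m).
\]

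Next I would run the first-lap reduction. Write $n=(k+1)q+r$ with $0\le r\le k$. During the first lap the cursor visits $0,1,\dots,n-1$ in order, so the survivors of that lap are exactly the soldiers in positions $\equiv 0\pmod{k+1}$. If $r=0$, the lap ends on a kill, the next action is a skip, and the $q$ survivors $0,k+1,\dots,(q-1)(k+1)$ begin a fresh game; relabelling $j(k+1)\mapsto j$ gives $T((k+1)q,k)=(k+1)T(q,k)$. If $1\le r\le k$, the $q+1$ survivors are $0,k+1,\dots,q(k+1)$, and inspecting the period-$(k+1)$ pattern shows the next $k+1-r$ visited soldiers are killed before the following skip; hence $T((k+1)q+r,k)=(k+1)\,U(q+1,k+1-r,k)$. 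Combining with the $U$ identity yields, for $1\le r\le k$,
\[
T\bigl((k+1)q+r,k\bigr)=
\begin{cases}
(k+1)\bigl[(k+1-r)+T(q+r-k,k)\bigr], & q\ge k+1-r,\\
(k+1)q, & q\le k-r.
\end{cases}
\]

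Finally, induct on $n$. The base cases are the degenerate instances $n=(k+1)q+r$ with $1\le r\le k$, $0\le q\le k-r$ (these include $n=1,\dots,k$): there $T(n,k)=(k+1)q$, and a direct check gives $a=q+r$, $b=0$, $m=q$, so the formula holds. For the inductive step, apply the hypothesis to the smaller argument $n'$ coming from the relevant recursion, writing $n'=a'(k+1)^{b'}+km'$ with $b'$ maximal and $m'\ge0$: if $r=0$ then $n'=q$ and $n=a'(k+1)^{b'+1}+k(k+1)m'$, while if $r\ge1$ then $n'=q+r-k$ and the identity $n=(k+1)n'+k(k+1-r)$ gives $n=a'(k+1)^{b'+1}+k[(k+1)m'+(k+1-r)]$. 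In both cases $a=a'$ (reduce modulo $k$) and the new exponent $b'+1$ is again maximal; this last point, the only one needing care, holds because $n'$ and $a'(k+1)^{b'+1}$ are both $\equiv a'\pmod{k}$ while the latter exceeds the former, so $a'(k+1)^{b'+1}-n'\ge k$, which forces the new value $m<a'(k+1)^{b'+1}$, i.e.\ $a(k+1)^{b+1}>n$. Since $m$ is precisely the quantity the recursion multiplies by $k+1$ and $T(n',k)=(k+1)m'$ by induction, $T(n,k)=(k+1)m$ in every case. The main obstacle is the first-lap step — identifying the surviving soldiers and, especially, the phase of the kill/skip pattern after wrap-around, and handling the small-$n$ boundary where the post-lap subgame degenerates (which is exactly what produces the two cases of the recursion); once that recursion and the ``$b$ stays maximal'' claim are in place, the remaining bookkeeping is routine.
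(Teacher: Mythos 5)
Your proof is correct, but it takes a genuinely different route from the paper's. The paper's induction peels off a single skip--kill round: after skipping soldier $0$ and killing $1,\dots,k$ it relabels and passes from $n$ to $n-k$, which decrements $m$ by one; the price is that the base of that induction is the whole family $n=a(k+1)^b$ (where $m=0$), which the paper handles separately by observing that each full lap divides the population by $k+1$ and always resumes at soldier $0$. You instead peel off an entire lap in every case, obtaining the ``divide by $k+1$'' recursion $T((k+1)q,k)=(k+1)T(q,k)$ and $T((k+1)q+r,k)=(k+1)\bigl[(k+1-r)+T(q+r-k,k)\bigr]$ (with the degenerate branch $(k+1)q$ when $q\le k-r$), which is the analogue of the classical Josephus doubling recurrence. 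The trade-off is clear: the paper's one-round reduction never has to track the phase of the kill/skip pattern, since each reduced game starts exactly at a skip, whereas your lap reduction forces you to introduce the auxiliary quantity $U(m,p,k)$ for games beginning with $p$ kills and to verify the wrap-around phase; in exchange your recursion descends in $O(\log n)$ steps rather than $O(m)$, needs only the finitely many base cases $q\le k-r$ rather than the infinite family $n=a(k+1)^b$, and is more useful as a computational recurrence. Your verification that $a$ is preserved and that the exponent $b'+1$ remains maximal under the substitution $n=(k+1)n'+k(k+1-r)$ is the one delicate point, and your argument for it (both $n'$ and $a'(k+1)^{b'+1}$ are congruent to $a'$ modulo $k$, so their positive difference is at least $k$) is sound.
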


\begin{proof}
We first note that if $n\leq k$, then $m=0$, and the soldier labeled $0$ survives.

Next we consider the case when $n=a(k+1)^b$. In each elimination round, 
we remove $k$ out of every $k+1$ soldiers. 
Thus after traversing the circle once, there are exactly
\[
\dfrac{n}{k+1}=a(k+1)^{b-1}
\] soldiers remaining. 
After traversing the circle $b$ times, there will be $a\leq k$ soldiers remaining, 
and after each round we skip the soldier labeled $0$. Therefore, in this case $T(a(k+1)^b,k)=0$.

Finally, we consider the case when $n\neq a(k+1)^b$. Since we write $n=a(k+1)^b+km$, we must have $m\geq 1$.
We prove the result in this case by induction on $n>k$. 
After the first round, soldier $0$ is skipped, and $1,2,\ldots,k$ are eliminated. 
Soldier $k+1$ is next to be skipped, so we cyclically shift the labels back by $k+1$, 
so soldier $k+1$ is now labeled $0$, soldier 0 now has label $n-k-1$, and every other soldier $x$ is labeled $x-k-1$. 
Now we have $n-k$ soldiers remaining, and since $n-k\equiv a\mod{k}$, 
we have $n-k=a(k+1)^b+k(m-1)\geq a(k+1)^b$. 
By induction, $T(n-k,k)=(k+1)(m-1)$. Cyclically shifting back, we get $T(n,k)=(k+1)m$, as desired. \qedhere
\end{proof}

The previous theorem gives a concrete algorithm for finding the position of the lone surviver in the Texas Chainsaw Josephus problem. \\
\textbf{Algorithm:}
\begin{itemize}
 \item Find the $a \in \{1,2, \ldots, k\}$ such that $a \equiv n \bmod k$. 
 \item Find the largest integer $b$ such that $n-a(k+1)^b  \geq 0$.  
 \item Let $m = \displaystyle \frac{n-a(k+1)^b}{k}$.
 \item Then the surviving soldier in the Texas Chainsaw Josephus Game is $T(n,k) = (k+1)m $. 
\end{itemize}

\begin{example}
For example, if $n=605$ and $k=7$, then we calculate that \[ a\equiv 605 \bmod 7  \Rightarrow  a =3 ,\]  and the largest value of $b$ such that $605-3(8)^b \geq 0$ is $b=2$.
We take  $m= \frac{605-3(8)^2}{7} = 59$.
Finally we calculate that  $T(n,k)=472$.
\end{example}

One may wonder the order in which the soldiers are eliminated from the game.
The following theorem identifies the exact time in which soldier $x$ is eliminated from the game as a function of 
$k$ and $n$.

\begin{theorem} \label{thm:elim}
If $x=(k+1)m+s$ for $1\leq s\leq k$, then 
soldier $x$ is eliminated after $x-m$ seconds.
If $x=(k+1)m$, then there exist unique nonnegative integers $a$ and $b$ such that 
$n-km=a(k+1)^b$ where $a\not\equiv0\bmod(k+1)$ and so dividing $a$ by $k+1$, $a=(k+1)q+r$ where $0<r<k+1$. 
In this case, soldier $x$ is eliminated after $n-q$ seconds.
\end{theorem}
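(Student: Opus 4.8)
The plan is to treat both clauses as bookkeeping on the order in which soldiers are removed; throughout, the ``time'' at which a soldier leaves is the number of eliminations up to and including its own, and I argue entirely about that count.

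For $x=(k+1)m+s$ with $1\le s\le k$ this is a direct count. On the first trip around the circle the game handles the labels $0,1,\dots,n-1$ in order, skipping precisely those divisible by $k+1$ and eliminating all the rest. When soldier $x$ is reached it is eliminated (since $x\not\equiv 0\bmod(k+1)$), and among the labels $0,1,\dots,x$ exactly the $m+1$ multiples $0,k+1,\dots,m(k+1)$ have been skipped; hence $x$ is the $(x+1)-(m+1)=(x-m)$-th elimination. By Theorem~\ref{onelife} the survivor is a multiple of $k+1$, so such an $x$ is genuinely removed, and this happens on the first trip.

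For $x=(k+1)m$ I would run strong induction on $n$ (with $k$ fixed), using the reduction behind Theorem~\ref{onelife}: carrying out the first round---skip $0$, eliminate $1,\dots,k$---leaves a fresh game on $n-k$ soldiers in which $k+1\mapsto 0$, every surviving label $\ge k+1$ drops by $k+1$, and the old label $0$ lands at position $n-k-1$. If $m\ge 1$, then $x\mapsto(k+1)(m-1)$ and $(n-k)-k(m-1)=n-km$, so the representation $n-km=a(k+1)^b$, and hence $q$, are unchanged; the inductive hypothesis together with the $k$ eliminations of the first round yields $n-q$. If $m=0$, so $x=0$ and $n-km=n=a(k+1)^b$: when $a\le k$ then $q=0$ and Theorem~\ref{onelife} gives $T(n,k)=0$, which is exactly what the formula says (a survivor is the soldier ``eliminated after $n$ seconds''); when $a\ge k+1$ the old soldier $0$ sits at position $n-k-1$ after the first round, and I split on whether $k+1\mid n$. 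If $b=0$, then $n-k-1=(k+1)(q-1)+r$ with $1\le r\le k$, so the already-proved first clause applies; if $b\ge 1$, then $n-k-1$ is a multiple of $k+1$, and I invoke the inductive hypothesis after checking that the reduced game's value $(n-k)-k\cdot\frac{n-k-1}{k+1}$ equals $\frac{n}{k+1}=a(k+1)^{b-1}$, so $q$ is again preserved. In every branch, restoring the $k$ first-round eliminations converts the count in the smaller game into $n-q$.

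The routine ingredients are these algebraic identities and the fact that every positive integer has a unique representation $N=a(k+1)^b$ with $(k+1)\nmid a$ (strip off the largest power of $k+1$; note $n-km\ge 1$ because $(k+1)m\le n-1$). The real obstacle, and where I expect to be most careful, is the accounting for soldier $0$: it is only skipped, not removed, in the first round, so one must pin it at position $n-k-1$ afterward and recognize that whether it then behaves like a ``first-clause'' or a ``second-clause'' soldier---and how its terminal case $q=0$ (the survivor) fits in---depends on the divisibility of $n$ by $k+1$. Getting those sub-cases to collapse to the single answer $n-q$ is the crux.
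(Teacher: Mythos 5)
Your proposal is correct and follows essentially the same route as the paper's proof: a direct count of the non-multiples of $k+1$ on the first pass for the first clause, and for the second clause a strong induction via the skip-$0$/eliminate-$1,\ldots,k$ reduction, checking that the data $(a,b,q)$ is preserved under relabeling and that the relabeled soldier $0$ falls under the first clause exactly when $(k+1)\nmid n$. The only difference is organizational: you dispatch the $q=0$ (survivor) situation directly via Theorem~\ref{onelife} instead of folding it into the base cases, which is a harmless and slightly cleaner bookkeeping choice.
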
 
 
\begin{proof}
If $x$ is not of the form $x=(k+1)m$, 
then $x$ is eliminated in first round. 
In the example following this proof, soldiers 1,2, and 3 are eliminated after 1,2, and 3 seconds.
Soldiers 5, 6, and 7 are eliminated after
4, 5, and 6 seconds, and so on. 
In general, it is easy to verify the first statement. 
The main part of this proof is to show when $x=(k+1)m$ is eliminated. 
This will be shown by a similar strong induction proof as Theorem 1.\\

If $n=1,2,\ldots,k+1$, the only soldier to consider is $x=0$, in which case $m=0$. When $n=1,2,\ldots,k$, then $a=n$, $b=0$, and $q=0$. This agrees with the fact that $x=0$ is eliminated last. When $n=k+1$, then $a=1$, $b=1$, and still $q=0$, again agreeing with the fact that soldier 0 is the last to be eliminated. Now that these base cases are verified, we move to the induction step of the proof.

If $n>k+1$, we begin by skipping soldier 0 and eliminating $1,2,\ldots,k$. This leaves the soldiers in order 
$k+1,k+2,\ldots,n-1,0$. 
Relabel them to $0,1,\ldots,n-k-1$, so if $x$ is in the original order, 
then $x^\prime=x-k-1$ in the relabeling, except if $x=0$, then $x^\prime=n-k-1$. 
This will be the two cases for the proof.
\begin{enumerate}
	\item[Case 1:] If $x=0$, then $x^\prime=n-k-1=(k+1)m+s$ for $0\leq s\leq k$. 
	If $s=0$, then there exists nonnegative integers $a$ and $b$ such that 
	$(n-k)-km=a(k+1)^b$ where $a\not\equiv0\bmod(k+1)$, 
	and so $a=(k+1)q+r$ where $0<r<k+1$. 
	The $x^\prime$th soldier is eliminated after $n-k-q$ seconds, and 
	so the $x$th soldier is eliminated after $n-q$ seconds in the original labeling. Notice that the calculation of $a$, $b$, and $q$ are the same for both $n-k$ and $n$ since the above equations imply that  

 \[
 n=(k+1)(m+1)+s \quad and \quad n-k(m+1)=a(k+1)^b.
 \] 
If $s\neq 0$, then the $x^\prime$th soldier is eliminated after $x^\prime-m$ seconds, 
and so the $x$th soldier is eliminated after $x-(m+1)$ seconds in the original labeling. 

	\item[Case 2:] If $x\neq0$, then $x^\prime=x-k-1=(k+1)(m-1)$ (In this case, $s$ is always 0). 
	By induction, there exists nonnegative integers $a$ and $b$ such that 
	$(n-k)-k(m-1)=a(k+1)^b$ where $a\not\equiv0\bmod(k+1)$, 
	and so $a=(k+1)q+r$ where $0<r<k+1$. 
	The $x^\prime$th soldier is eliminated at $n-k-q$, 
	and so the $x$th soldier is eliminated at $n-q$. Again, the calculation of $a$, $b$, and $q$ are the same for both $n-k$ and $n$ for the same reason as in Case 1. \qedhere

\end{enumerate}
\end{proof}

If we think of the soldiers in the Texas Chainsaw Josephus problem as cards in a standard 52 card deck, then predicting the elimination order gives a collection of interesting card tricks.  For instance, the following example simulates shuffling a deck of playing cards by repeatedly moving a card to the bottom and laying 3 cards down on the table.

\begin{example} 
Let $n=52$ and $k=3$, then the elimination order starts with $1,2,3,5,6,7,9,10,11,\ldots$ 
continuing to eliminate all number not of the form $4k$. 
After passing through the deck once, the elimination order continues as 

\[
4,8,12,20,24,28,36,40,44,0,16,32,48.
\]

Thus a magician can predict the last four cards remaining by having an audience member show the rest of the audience the first card, 17th card, 33rd card, and 49th card in that order, or 
the magician can ask an audience member to pick a favorite number between 1 and 52, 
show them the card, and then identify that card by playing the number of elimination rounds predicted by Theorem \ref{thm:elim}.
\begin{enumerate}
	\item[28:] The number $28=4(7)$, so $m=7$. So, $n-km=52-3(7)=31=(4(7)+3)(4)^0$. 
	Thus, $q=7$, which means $28$ is $45$th, or $7$th from the last.
	\item[16:] The number $16=4(4)$, so $m=4$. 
	So, $n-km=52-3(4)=40=(4(2)+2)(4)^1$. Thus, $q=2$, which means $16$ is $50$th, 
	or $2$nd from the last.
	\item[48:] The number $48=4(12)$, so $m=12$. 
	So, $n-km=52-3(12)=16=(4(0)+1)(4)^2$. Thus, $q=0$, which means $48$ is $52$th, or last.
\end{enumerate}

\end{example}

\section{A variant of the feline Josephus problem} \label{section3}
We now consider a situation where each soldier has a fixed number of $\ell$ `lives'. 
The soldiers do not get eliminated until they 
have been hit $\ell$ times. 
For example, with $n=7$ soldiers, $k=3$, and $\ell=2$, 
the soldier labeled 4 would remain after the following elimination process.

\begin{itemize}
 \item Round 1: 0,4 have two lives, and 1,2,3,5,6 have one life.
\item Round 2: 0,1,4,5 have one life, and 2,3,6 are eliminated.
\item Round 3: All but 4 have been eliminated.

\end{itemize}

Let $T(n,k,\ell)$ be the label of the soldier who survives the game where $k$ soldiers are eliminated in a row, and each soldier has $\ell$ lives.
\begin{lemma} For any $k$ and $\ell$, if $n\geq k$, then

\[
T((k+1)n,k,\ell)=(k+1)\cdot T(n,k,\ell)
\]

\end{lemma}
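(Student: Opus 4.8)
\medskip

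\textbf{Proof proposal.} The plan is to show that the game on $(k+1)n$ soldiers, run for exactly $\ell(k+1)n$ seconds, collapses in a completely pattern-preserving way into a fresh copy of the game on $n$ soldiers, under the scaling correspondence in which the block $B_j=\{j(k+1),\,j(k+1)+1,\,\dots,\,j(k+1)+k\}$ of $k+1$ consecutive soldiers plays the role of the single soldier $j$. Granting this, the survivor of the $(k+1)n$-soldier game is the image of the survivor of the $n$-soldier game, and that image is exactly $(k+1)\,T(n,k,\ell)$.

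First I would record the basic synchronization. The $(k+1)n$ soldiers split into the $n$ consecutive length-$(k+1)$ blocks $B_0,\dots,B_{n-1}$, and the repeating pattern ``skip one soldier, then strike the next $k$'', begun by skipping soldier $0$, never straddles a block boundary: one pass around the circle consists of $n$ consecutive pattern-units of length $k+1$, and the $j$th unit skips the leading soldier $j(k+1)$ of $B_j$ and strikes the remaining $k$ soldiers of $B_j$. Since a pass takes exactly $(k+1)n$ seconds, a multiple of $k+1$, the pattern is back in phase at the end of every pass (the next action is again a skip) and the next soldier in line is soldier $0$ again.

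Next I would iterate this over the first $\ell$ passes. As long as every soldier is still present, a pass leaves each block representative $j(k+1)$ untouched and removes exactly one life from every non-representative; so after the $i$th pass, for $1\le i\le\ell-1$, each representative still has $\ell$ lives and every non-representative has $\ell-i\ge 1$ lives, whence the ``all present'' hypothesis persists and the next pass again has the block-synchronized form. On the $\ell$th pass the non-representatives, now down to a single life, are struck and eliminated; the point to check is that this does not disturb the synchronization, and indeed within the $j$th unit the skip still lands on $j(k+1)$ while the $k$ strikes fall, in order, on $j(k+1)+1,\dots,j(k+1)+k$, each alive at the instant it is struck, so the $j$th unit deletes precisely $B_j\setminus\{j(k+1)\}$ and the next unit opens by skipping $(j+1)(k+1)$, the leading soldier of $B_{j+1}$. (When $\ell=1$ this last pass is the only pass, and the range $1\le i\le\ell-1$ above is empty.) Hence after $\ell(k+1)n$ seconds the soldiers still in the circle are exactly $0,k+1,\dots,(n-1)(k+1)$, each carrying $\ell$ lives, the pattern is in phase, and the next soldier to be skipped is soldier $0$.

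Finally I would close with the evident isomorphism. The map $\phi(j)=(k+1)j$ is a bijection from $\{0,\dots,n-1\}$ onto this surviving set that preserves cyclic order and fixes $0$, so the configuration just described is, via $\phi$, literally the initial configuration of the game on $n$ soldiers with $\ell$ lives each: same number of soldiers, same life counts, same cyclic order, same phase of the pattern, same soldier skipped first. Since the future of the game is determined by that data alone, from time $\ell(k+1)n$ on the big game is the $\phi$-image of the small game, and in particular its survivor is $\phi\big(T(n,k,\ell)\big)=(k+1)\,T(n,k,\ell)$. I expect the only real obstacle to be the bookkeeping in the third paragraph: one must check carefully that during the clean-up pass the eliminations stay strictly inside the current block and that the pointer for the next skip always advances to the leading soldier of the next block, so that the pattern never slips out of step with the block decomposition. (The standing hypothesis $n\ge k$ does not seem to enter this particular argument.)
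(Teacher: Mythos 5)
Your proposal is correct and takes essentially the same route as the paper: both arguments observe that the first $\ell$ passes stay synchronized with the length-$(k+1)$ blocks, eliminating every non-multiple of $k+1$ and leaving the multiples with all $\ell$ lives, after which the scaling map $j\mapsto(k+1)j$ identifies the remaining game with a fresh game on $n$ soldiers. Your write-up simply makes explicit the bookkeeping (and the aside that $n\ge k$ is not really used) that the paper's terser proof leaves implicit.
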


\begin{proof}

Suppose we start with $(k+1)n$ soldiers and begin the elimination. 
In the first $\ell$ rounds, the soldiers $1,2,\ldots,k,k+2,k+3,\ldots,(k+1)n-2,(k+1)n-1$ 
are eliminated and the soldiers $0,(k+1),2(k+1),\ldots,(n-1)(k+1)$ 
remain and still have $\ell$ lives. 
We now have $n$ soldiers, so rename the survivors $0,1,2,\ldots,n$. 
Let $T(n,k,\ell)$ be the surviving soldier in the new labelling, 
then the corresponding surviving soldier in the original labelling is $(k+1)T(n,k, \ell)$.
\end{proof}

We can obtain a similar reduction on the number of lives when $n$ and $k+1$ are relatively prime, and $\ell>k$.

\begin{lemma}\label{reducelives}

If $\gcd{(n,k+1)}=1$ and $\ell>k$, and $\ell\equiv\ell^{\prime}\bmod{k}$, then $T(n,k,\ell)=T(n,k,\ell^{\prime})$.

\end{lemma}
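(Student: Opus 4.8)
The plan is to establish the one-step reduction $T(n,k,\ell)=T(n,k,\ell-k)$ whenever $\ell>k$, and then iterate it down to $\ell'$. The engine behind this is that, under the hypothesis $\gcd(n,k+1)=1$, a block of $k+1$ consecutive trips around the circle has an extremely clean effect: it removes exactly $k$ lives from every soldier, one at a time, leaving the game in exactly the configuration in which a game with $k$ fewer lives would begin.

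First I would fix the bookkeeping. Call each moment a soldier is passed over — whether hit or skipped — a \emph{visit}, and number the visits $0,1,2,\dots$\,. The skips occur at the visits forming an arithmetic progression of common difference $k+1$. As long as no soldier has yet been removed, visit number $v$ falls on the soldier labelled $v \bmod n$, and each soldier is visited exactly once per trip around the circle. Now look at the first $(k+1)n$ visits: exactly $n$ of them are skips, and because $\gcd(n,k+1)=1$ these $n$ skip-visits land on $n$ \emph{distinct} soldiers, hence on every soldier exactly once. So, provided all $n$ soldiers remain present throughout, each soldier is visited $k+1$ times and skipped once, i.e. hit exactly $k$ times.

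Next I would check that no soldier is actually removed during these first $(k+1)n$ visits, which is what legitimises the count above. A given soldier loses its lives at most one per trip around the circle, so its life count decreases monotonically from $\ell$ down to $\ell-k$ and never drops below $\ell-k\ge 1$ — this is the one place $\ell>k$ is used. Hence all $n$ soldiers survive the first $(k+1)n$ visits; at that moment every soldier has exactly $\ell-k$ lives, and since both $n$ and $k+1$ divide $(k+1)n$ we are back at soldier $0$ and back at the start of the skip cycle, which is precisely the initial configuration of the game with parameters $(n,k,\ell-k)$. Therefore $T(n,k,\ell)=T(n,k,\ell-k)$.

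Finally, iterating this identity while the number of lives exceeds $k$ reduces $\ell$ to the unique representative of $\ell \bmod k$ in $\{1,\dots,k\}$, and applying the same reduction to $\ell'$ (which, if $\ell'\le k$, already is that representative) yields $T(n,k,\ell)=T(n,k,\ell')$. The only step demanding real care is the coprimality count — the assertion that the $n$ skip-visits in a block of $(k+1)n$ visits strike all $n$ soldiers — and that, together with the observation that nobody is eliminated in the meantime, is essentially the whole content of the proof; the remainder is bookkeeping of the kind already used in the previous lemma.
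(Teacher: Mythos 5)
Your argument is essentially the paper's: both unroll the first $k+1$ trips around the circle into $(k+1)n$ visits and use $\gcd(n,k+1)=1$ to conclude every soldier is skipped exactly once, hence loses exactly $k$ lives, reducing $\ell$ to $\ell-k$ and iterating. You phrase the key step as a direct bijection count where the paper argues by contradiction, and you additionally justify that no soldier is eliminated mid-block (which the paper merely asserts), but these are cosmetic differences.
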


\begin{proof}
It suffices to show that after $k+1$ rounds, each soldier has been skipped exactly once, so the soldiers each have $\ell-k$ lives, and the $(k+2)$nd round starts by skipping soldier 0. Suppose for the sake of contradiction that soldier $i$ is skipped twice in the first $k+1$ rounds, say in rounds $s$ and $t$ with $1\leq s<t\leq k+1$. Since no soldier is eliminated, we can instead consider $(k+1)n$ soldiers labeled 0 through $(k+1)n-1$, round $j$ is represented by soldiers $(j-1)n$ through $jn-1$, and soldier $i$ is represented by the $k+1$ soldiers $i,i+n,i+2n,\ldots$. The soldiers skipped are the multiples of $k+1$, thus $i+(s-1)n$ and $i+(t-1)n$ are both multiples of $k+1$. This implies that $(t-s)n$ is a multiple of $k+1$, but since $\gcd{(n,k+1)}=1$, this would imply that $t-s$ is a multiple of $k+1$. This contradicts our earlier assumption that $1\leq s<t\leq k+1$. \end{proof}

Lemma \ref{reducelives} implies in particular that when $\ell=k+1$ the survivor $T(n,k, \ell)$  is 
the survivor of $T(n,k,1)=T(n,k)$, which means the case where $\ell=k$ is the maximum for $\ell$ 
that we need to consider. This is the first step in the proof of the following result for the survivor when $\ell=k$.

\begin{theorem} \label{thm:3}
If $\gcd(n,k+1)=1$ and $n>k$, then 
\[
T(n,k,k)=r+\dfrac{(k+1)^2\left(q-(k+1)^b\right)-i}{k}
\]
where $n=q(k+1)+r$ with $1\leq r\leq k$, $b=\left\lfloor \log_{k+1}{q}\right\rfloor$, 
and $i\equiv q - 1\bmod{k}$ with $0\leq i\leq k-1$.

\end{theorem}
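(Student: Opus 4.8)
The plan is to reduce the $\ell=k$ game on $n$ soldiers to the one-life game (Theorem~\ref{onelife}) by carefully tracking what happens in the first $k$ rounds, using the counting idea already employed in the proof of Lemma~\ref{reducelives}. Since $\gcd(n,k+1)=1$, after $k$ rounds every soldier has been skipped exactly once, except that the bookkeeping of \emph{where} in the circle each skip lands is governed by the multiples of $k+1$ among $0,1,\dots,kn-1$. First I would identify, for each of the $k$ rounds, which soldier is skipped: writing round $j$ ($1\le j\le k$) as occupying positions $(j-1)n,\dots,jn-1$ in the ``unrolled'' circle of $kn$ tokens, the skipped soldier in round $j$ is the residue modulo $n$ of the unique multiple of $k+1$ in that block. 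This produces an explicit list of the $k$ soldiers who have lost a life ``slowly'' versus the ones who have lost all their lives; after round $k$, the surviving configuration is a circle of soldiers each with exactly $1$ life, but the circle has been rotated, and I must pin down the new starting point and the new labels in terms of $n,k$.

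The key computation is therefore: after $k$ rounds, how many soldiers remain, what are their original labels, and which one does the $(k+1)$st round begin on? Each round removes (eventually) $k$ out of every $k+1$ tokens, but because lives are being spent, the first $k$ rounds only strip one life from everyone; so after $k$ rounds I claim that all $n$ original soldiers remain, each with one life, but the ``cursor'' has advanced. Actually the reduction must be more subtle: the intended mechanism is surely that the parameter $q$ in the statement (from $n=q(k+1)+r$) plays the role of $m$ in Theorem~\ref{onelife}, and $b=\lfloor\log_{k+1}q\rfloor$ is exactly the exponent appearing there applied to the \emph{reduced} count. So I would show that the $\ell=k$ game on $n$ soldiers, after the first $k$ rounds, is equivalent (up to a cyclic relabeling by some explicit shift depending on $i\equiv q-1\bmod k$) to the one-life game on a circle whose size forces Theorem~\ref{onelife} to output the quantity $(k+1)^2(q-(k+1)^b)/k$ before the shift; then add back the rotation offset, which contributes the $r$ and the $-i/k$ correction terms. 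Making the arithmetic of the relabeling match the stated formula exactly — in particular getting the $(k+1)^2$ factor and the division by $k$ to come out as integers, which is where $i\equiv q-1\bmod k$ is forced — is the crux.

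Concretely, I would proceed: (1) use the unrolled-circle argument to prove that after $k$ rounds each soldier has been skipped once and to compute the skip position in each round as a function of $n$ and $k$; (2) deduce the original label of the soldier on which round $k+1$ begins, call it $c$; (3) relabel so that soldier $c$ becomes $0$, reducing to a one-life game on a circle of the appropriate size; (4) apply Theorem~\ref{onelife} to that game, reading off its parameters $a,b,m$ in terms of $q$ and $r$ (here $b=\lfloor\log_{k+1}q\rfloor$ emerges because the reduced soldier count is essentially $q(k+1)$-ish after accounting for the $r$ leftover and the skips); (5) undo the relabeling by adding $c$ and simplify to the claimed closed form, checking the divisibility that makes $i$ well-defined and the expression integral.

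The main obstacle I expect is step~(2)–(3): precisely locating the cursor after $k$ rounds. The skips in rounds $1,\dots,k$ land at positions that are the residues mod $n$ of the multiples of $k+1$ in consecutive length-$n$ blocks, and these residues do \emph{not} form an arithmetic progression in any obvious way — they involve the ``wrap-around'' of $n$ past multiples of $k+1$, which is exactly what introduces the $\lfloor\cdot\rfloor$ in $b$ and the modular quantity $i$. Controlling this interaction between the additive structure (positions) and the multiplicative structure (powers of $k+1$ from iterating Theorem~\ref{onelife}) is the delicate part, and I would want a clean lemma isolating ``the label on which round $k+1$ begins, given $\gcd(n,k+1)=1$'' before attempting the final substitution.
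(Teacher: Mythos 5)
Your overall strategy---run the game until every surviving soldier has exactly one life, reduce to the one-life game of Theorem~\ref{onelife}, and then undo a relabeling---is exactly the route the paper takes, and your instinct to use the unrolled-circle argument from Lemma~\ref{reducelives} to locate the skips is the right one. But the proposal stalls precisely at the steps that constitute the proof, and the one concrete intermediate claim you do make is false: after $k$ rounds it is \emph{not} the case that all $n$ soldiers remain with one life each. Each soldier is skipped exactly once in $k+1$ rounds, so the $q$ soldiers whose skip would only occur in round $k+1$ (those congruent to $r\bmod (k+1)$) are hit in every one of the first $k$ rounds and are eliminated, and the soldiers $0,\dots,r-1$ receive their $k$th hit immediately afterward, before the next skip. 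Hence exactly $q+r$ soldiers die in this phase and $qk$ remain, each with one life, with the cursor about to skip the smallest survivor. Identifying the reduced circle as having size $qk$ is essential: since $qk\equiv k\bmod k$, Theorem~\ref{onelife} gives $a=k$ and $m=q-(k+1)^b$, and this is the only place $b=\left\lfloor\log_{k+1}q\right\rfloor$ can come from. Your plan instead reverse-engineers the reduced game from the target formula, which is not a proof.

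The second gap is the relabeling. You describe it as ``a cyclic relabeling by some explicit shift,'' but the $qk$ survivors are not a contiguous arc of the circle: they are the labels in $\{r,\dots,n-1\}$ with every $(k+1)$st one deleted. The correct map is the order-preserving compression $r+t(k+1)+i\mapsto tk+i$ for $0\le i\le k-1$, and the final formula comes from inverting it on $T(qk,k)=(k+1)\bigl(q-(k+1)^b\bigr)=tk+i$, which is what forces $i\equiv q-1\bmod k$ and produces the $(k+1)^2/k$ factor and the $-i$ term. A rotation by a constant cannot produce either. Until you (i) prove which $qk$ soldiers survive the first phase and where the cursor lands, and (ii) write down and invert the compression map, the argument has no content beyond its (correct) opening move.
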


\begin{proof}
First, consider the case $\ell=k+1$.  
Since the $(k+2)$nd round would have skipped soldier 0, 
we can identify the soldiers who would have been skipped in the $(k+1)$st round, 
who would also be the soldiers eliminated in the $k$th round of the $\ell=k$ case. 
Soldiers labeled $x=n-t(k+1)$ for $1\leq t<\frac{n}{k+1}$ are eliminated in the $(k+1)$st round. 
Let $q$ be the largest integer less than $\frac{n}{k+1}$, and let $r=n-q(k+1)$. Then $r-1$ is the smallest soldier eliminated in the $k$th round. 
Moreover, each soldier less than $r$ is eliminated in the $(k+1)$st round. 
Therefore, from this point on, $r, r+1,\ldots, r+k+1, r+k+2,\ldots, r+2k,\ldots,r+q(k+1)-2$ 
are the $qk$ soldiers remaining with one life each, and we are to start by skipping $r$. 
So, we can relabel to $0, 1, 2, \ldots, qk-1$, and find the survivor as $T(qk,k)$ using Theorem \ref{onelife}.

\begin{center}
\begin{tabular}{cccccccc}
$r$ & $r+1$  & $\cdots$ & $r+k-1$ & $r+k+1$ & $r+k+2$ & $\cdots$ & $n-1$ \\ 
$\downarrow$ & $\downarrow$ &     & $\downarrow$ & $\downarrow$ & $\downarrow$ & & $\downarrow$ \\
$0$ & $1$ &  $\cdots$  &  $k-1$ & $k$ & $k+1$ & $\cdots$ & $qk-1$ \\  
\end{tabular}
\end{center}

The relabeling map above is $r+t(k+1)+i\rightarrow tk+i$ for $0\leq i\leq k-1$ and $1\leq t<q$. 
Thus, we obtain the survivor $T(n,k,k)$ by reversing the relabeling of $T(qk,k)$.

Since $qk\equiv k\bmod{k}$, $T(qk,k)=(k+1)\left(q-(k+1)^b\right)$, 
where $b$ is defined for $n=qk$ in Theorem \ref{onelife}. 
To reverse the relabeling of $T(qk,k)=tk+i\rightarrow r+t(k+1)+i$, 
we must find $t$ and $i$. Since $0\leq i\leq k-1$ and $q -1 \equiv i\bmod{k}$, 
we can find $i$ uniquely by dividing $q$ by $k$. For $t$, we just solve $T(qk,k)=tk+i$ for $t$ to obtain
\[
t=\dfrac{(k+1)\left(q-(k+1)^b\right)-i}{k}.
\] Finally, we get
\[
T(n,k,k)=r+\dfrac{(k+1)\left(q-(k+1)^b\right)-i}{k}(k+1)+i,
\] which simplifies to the equation in the theorem statement.
\end{proof}

\begin{example}
The first and last three rounds of the game when $n=13$ and $\ell=k =4$ are depicted in 
Figure \ref{fig:slaughter} to illustrate the proof of Theorem \ref{thm:3}.

 \begin{figure} [h]
  \begin{center}
 \scalebox{.65}{
 \begin{picture}(200,180)(0,0)

\put(100,10){\color{black}{\circle{20}}}
\put(97,7){\color{black}{0}}
\put(65,20){\color{black}{\circle{20}}}
\put(62,17){\color{black}{1}}
\put(30,40){\color{black}{\circle{20}}}
\put(27,37){\color{black}{2}}
\put(10,70){\color{black}{\circle{20}}}
\put(7,67){\color{black}{3}}
\put(10,110){\color{black}{\circle{20}}}
\put(7,107){\color{black}{4}}
\put(30,140){\color{black}{\circle{20}}}
\put(27,137){\color{black}{5}}
\put(47,122){\vector(-1,1){10}}
\put(49,112){Skip}
\put(70,160){\color{black}{\circle{20}}}
\put(67,157){\color{black}{6}}
\put(130,160){\color{black}{\circle{20}}}
\put(127,157){\color{black}{7}}
\put(170,140){\color{black}{\circle{20}}}
\put(167,137){\color{black}{8}}
\put(190,110){\color{black}{\circle{20}}}
\put(187,107){\color{black}{9}}
\put(190,70){\color{black}{\circle{20}}}
\put(185,67){\color{black}{10}}
\put(170,40){\color{black}{\circle{20}}}
\put(164,37){\color{black}{11}}
\put(135,20){\color{black}{\circle{20}}}
\put(129,17){\color{black}{12}}

\put(70,25){\color{black}{\circle*{3}}}
\put(35,45){\color{black}{\circle*{3}}}
\put(15,75){\color{black}{\circle*{3}}}
\put(15,115){\color{black}{\circle*{3}}}
 
\put(70,25){\color{black}{\circle*{3}}}
\put(35,45){\color{black}{\circle*{3}}}
\put(15,75){\color{black}{\circle*{3}}}
\put(15,115){\color{black}{\circle*{3}}}
 
\end{picture} \hspace{1cm}
 \begin{picture}(200,180)(0,0)

\put(100,10){\color{black}{\circle{20}}}
\put(97,7){\color{black}{0}}
\put(65,20){\color{black}{\circle{20}}}
\put(62,17){\color{black}{1}}
\put(30,40){\color{black}{\circle{20}}}
\put(27,37){\color{black}{2}}
\put(10,70){\color{black}{\circle{20}}}
\put(7,67){\color{black}{3}}
\put(10,110){\color{black}{\circle{20}}}
\put(7,107){\color{black}{4}}
\put(30,140){\color{black}{\circle{20}}}
\put(27,137){\color{black}{5}}
\put(70,160){\color{black}{\circle{20}}}
\put(67,157){\color{black}{6}}
\put(130,160){\color{black}{\circle{20}}}
\put(127,157){\color{black}{7}}
\put(170,140){\color{black}{\circle{20}}}
\put(167,137){\color{black}{8}}
\put(190,110){\color{black}{\circle{20}}}
\put(187,107){\color{black}{9}}
\put(190,70){\color{black}{\circle{20}}}
\put(185,67){\color{black}{10}}
\put(165,70){\vector(1,0){15}}
\put(139,67){Skip}
\put(170,40){\color{black}{\circle{20}}}
\put(164,37){\color{black}{11}}
\put(135,20){\color{black}{\circle{20}}}
\put(129,17){\color{black}{12}}

\put(70,25){\color{black}{\circle*{3}}}
\put(35,45){\color{black}{\circle*{3}}}
\put(15,75){\color{black}{\circle*{3}}}
\put(15,115){\color{black}{\circle*{3}}}
 
\put(70,25){\color{black}{\circle*{3}}}
\put(35,45){\color{black}{\circle*{3}}}
\put(15,75){\color{black}{\circle*{3}}}
\put(15,115){\color{black}{\circle*{3}}}
\put(75,165){\color{black}{\circle*{3}}}
\put(135,165){\color{black}{\circle*{3}}}
\put(175,145){\color{black}{\circle*{3}}}
\put(195,116){\color{black}{\circle*{3}}}

\put(75,165){\color{black}{\circle*{3}}}
\put(135,165){\color{black}{\circle*{3}}}
\put(175,145){\color{black}{\circle*{3}}}
\put(195,116){\color{black}{\circle*{3}}}
\end{picture}}

  \vspace{12pt} 
  
   \scalebox{.65}{
\begin{picture}(200,180)(0,0)

\put(100,10){\color{black}{\circle{20}}}
\put(97,7){\color{black}{0}}
\put(65,20){\color{black}{\circle{20}}}
\put(62,17){\color{black}{1}}
\put(30,40){\color{black}{\circle{20}}}
\put(27,37){\color{black}{2}}
\put(53,63){\vector(-1,-1){15}}
\put(57,62){Skip}
\put(10,70){\color{black}{\circle{20}}}
\put(7,67){\color{black}{3}}
\put(10,110){\color{black}{\circle{20}}}
\put(7,107){\color{black}{4}}
\put(30,140){\color{black}{\circle{20}}}
\put(27,137){\color{black}{5}}
\put(70,160){\color{black}{\circle{20}}}
\put(67,157){\color{black}{6}}
\put(130,160){\color{black}{\circle{20}}}
\put(127,157){\color{black}{7}}
\put(170,140){\color{black}{\circle{20}}}
\put(167,137){\color{black}{8}}
\put(190,110){\color{black}{\circle{20}}}
\put(187,107){\color{black}{9}}
\put(190,70){\color{black}{\circle{20}}}
\put(185,67){\color{black}{10}}
\put(170,40){\color{black}{\circle{20}}}
\put(164,37){\color{black}{11}}
\put(135,20){\color{black}{\circle{20}}}
\put(129,17){\color{black}{12}}

\put(70,25){\color{black}{\circle*{3}}}
\put(35,45){\color{black}{\circle*{3}}}
\put(15,75){\color{black}{\circle*{3}}}
\put(15,115){\color{black}{\circle*{3}}}
 
\put(70,25){\color{black}{\circle*{3}}}
\put(35,45){\color{black}{\circle*{3}}}
\put(15,75){\color{black}{\circle*{3}}}
\put(15,115){\color{black}{\circle*{3}}}
\put(75,165){\color{black}{\circle*{3}}}
\put(135,165){\color{black}{\circle*{3}}}
\put(175,145){\color{black}{\circle*{3}}}
\put(195,116){\color{black}{\circle*{3}}}

\put(75,165){\color{black}{\circle*{3}}}
\put(135,165){\color{black}{\circle*{3}}}
\put(175,145){\color{black}{\circle*{3}}}
\put(195,116){\color{black}{\circle*{3}}}
\put(175,46){\color{black}{\circle*{3}}}
\put(140,26){\color{black}{\circle*{3}}}
\put(105,15){\color{black}{\circle*{3}}}
\put(60,25){\color{black}{\circle*{3}}}

\put(175,46){\color{black}{\circle*{3}}}
\put(140,26){\color{black}{\circle*{3}}}
\put(105,15){\color{black}{\circle*{3}}}
\put(60,25){\color{black}{\circle*{3}}}

\end{picture}  \hspace{1cm}
\begin{picture}(200,180)(0,0)

 \put(-10,-10){\color{black}{\line(0,1){190}}}
  \put(-10,-10){\color{black}{\line(1,0){220}}}
    \put(-10,180){\color{black}{\line(1,0){220}}}
    \put(210,-10){\color{black}{\line(0,1){190}}} 
\put(100,10){\color{black}{\circle{20}}}
\put(97,7){\color{black}{0}}
\put(65,20){\color{black}{\circle{20}}}
\put(62,17){\color{black}{1}}
\put(30,40){\color{black}{\circle{20}}}
\put(27,37){\color{black}{2}}
\put(10,70){\color{black}{\circle{20}}}
\put(7,67){\color{black}{3}}
\put(10,110){\color{black}{\circle{20}}}
\put(7,107){\color{black}{4}}
\put(37,110){\vector(-1,0){15}}
\put(40,107){Skip}
\put(30,140){\color{black}{\circle{20}}}
\put(27,137){\color{black}{5}}
\put(70,160){\color{black}{\circle{20}}}
\put(67,157){\color{black}{6}}
\put(130,160){\color{black}{\circle{20}}}
\put(127,157){\color{black}{7}}
\put(170,140){\color{black}{\circle{20}}}
\put(167,137){\color{black}{8}}
\put(190,110){\color{black}{\circle{20}}}
\put(187,107){\color{black}{9}}
\put(190,70){\color{black}{\circle{20}}}
\put(185,67){\color{black}{10}}
\put(170,40){\color{black}{\circle{20}}}
\put(164,37){\color{black}{11}}
\put(135,20){\color{black}{\circle{20}}}
\put(129,17){\color{black}{12}}

\put(70,25){\color{black}{\circle*{3}}}
\put(35,45){\color{black}{\circle*{3}}}
\put(15,75){\color{black}{\circle*{3}}}
\put(15,115){\color{black}{\circle*{3}}}
 
\put(70,25){\color{black}{\circle*{3}}}
\put(35,45){\color{black}{\circle*{3}}}
\put(15,75){\color{black}{\circle*{3}}}
\put(15,115){\color{black}{\circle*{3}}}
\put(75,165){\color{black}{\circle*{3}}}
\put(135,165){\color{black}{\circle*{3}}}
\put(175,145){\color{black}{\circle*{3}}}
\put(195,116){\color{black}{\circle*{3}}}

\put(75,165){\color{black}{\circle*{3}}}
\put(135,165){\color{black}{\circle*{3}}}
\put(175,145){\color{black}{\circle*{3}}}
\put(195,116){\color{black}{\circle*{3}}}
\put(175,46){\color{black}{\circle*{3}}}
\put(140,26){\color{black}{\circle*{3}}}
\put(105,15){\color{black}{\circle*{3}}}
\put(60,25){\color{black}{\circle*{3}}}

\put(175,46){\color{black}{\circle*{3}}}
\put(140,26){\color{black}{\circle*{3}}}
\put(105,15){\color{black}{\circle*{3}}}
\put(60,25){\color{black}{\circle*{3}}}
 \put(5,75){\color{black}{\circle*{3}}}
 \put(5,115){\color{black}{\circle*{3}}}
\put(35,145){\color{black}{\circle*{3}}}
 \put(65,165){\color{black}{\circle*{3}}}

 \put(5,75){\color{black}{\circle*{3}}}
 \put(5,115){\color{black}{\circle*{3}}}
\put(35,145){\color{black}{\circle*{3}}}
 \put(65,165){\color{black}{\circle*{3}}}
\put(165,145){\color{black}{\circle*{3}}}
 \put(185,116){\color{black}{\circle*{3}}}
 \put(195,76){\color{black}{\circle*{3}}}
 \put(165,46){\color{black}{\circle*{3}}}

\put(165,145){\color{black}{\circle*{3}}}
 \put(185,116){\color{black}{\circle*{3}}}
 \put(195,76){\color{black}{\circle*{3}}}
 \put(165,46){\color{black}{\circle*{3}}}

 \put(95,15){\color{black}{\circle*{3}}}
\put(70,15){\color{black}{\circle*{3}}}
\put(25,45){\color{black}{\circle*{3}}}
 \put(15,65){\color{black}{\circle*{3}}}

 \put(95,15){\color{black}{\circle*{3}}}
\put(70,15){\color{black}{\circle*{3}}}
\put(25,45){\color{black}{\circle*{3}}}
 \put(15,65){\color{black}{\circle*{3}}}
 
\put(25,145){\color{black}{\circle*{3}}}
 \put(75,155){\color{black}{\circle*{3}}}
\put(125,165){\color{black}{\circle*{3}}}
 \put(175,135){\color{black}{\circle*{3}}}
 
\put(25,145){\color{black}{\circle*{3}}}
 \put(75,155){\color{black}{\circle*{3}}}
\put(125,165){\color{black}{\circle*{3}}}
 \put(175,135){\color{black}{\circle*{3}}}
\put(185,76){\color{black}{\circle*{3}}}
\put(175,34){\color{black}{\circle*{3}}}
 \put(130,26){\color{black}{\circle*{3}}}
\put(105,5){\color{black}{\circle*{3}}}

\put(185,76){\color{black}{\circle*{3}}}
\put(175,34){\color{black}{\circle*{3}}}
 \put(130,26){\color{black}{\circle*{3}}}
\put(105,5){\color{black}{\circle*{3}}}
\put(35,35){\color{black}{\circle*{3}}}
\put(15,105){\color{black}{\circle*{3}}}
\put(35,135){\color{black}{\circle*{3}}}
\put(5,65){\color{black}{\circle*{3}}}

\put(35,35){\color{black}{\circle*{3}}}
\put(10,70){\color{black!40}{\circle{20}}}
\put(7,67){\color{black!40}{3}}
\put(5,65){\color{black!40}{\circle*{3}}}
\put(5,75){\color{black!40}{\circle*{3}}}
\put(15,65){\color{black!40}{\circle*{3}}}
\put(15,75){\color{black!40}{\circle*{3}}}

\put(15,105){\color{black}{\circle*{3}}}
\put(35,135){\color{black}{\circle*{3}}}
 \put(135,155){\color{black}{\circle*{3}}}
 \put(165,135){\color{black}{\circle*{3}}}
 \put(195,104){\color{black}{\circle*{3}}}
 \put(195,64){\color{black}{\circle*{3}}}

 \put(135,155){\color{black}{\circle*{3}}}
 \put(165,135){\color{black}{\circle*{3}}}
 \put(195,104){\color{black}{\circle*{3}}}
 \put(195,64){\color{black}{\circle*{3}}}
 
\put(170,140){\color{black!40}{\circle{20}}}
\put(167,137){\color{black!40}{8}}
\put(165,135){\color{black!40}{\circle*{3}}}
\put(175,135){\color{black!40}{\circle*{3}}}
\put(165,145){\color{black!40}{\circle*{3}}}
\put(175,145){\color{black!40}{\circle*{3}}}

\put(140,14){\color{black}{\circle*{3}}}
 \put(95,5){\color{black}{\circle*{3}}}
 \put(60,15){\color{black}{\circle*{3}}}
\put(25,35){\color{black}{\circle*{3}}}

\put(140,14){\color{black}{\circle*{3}}}

\put(100,10){\color{black!40}{\circle{20}}}
\put(97,7){\color{black!40}{0}}
\put(95,5){\color{black!40}{\circle*{3}}}
\put(95,15){\color{black!40}{\circle*{3}}}
\put(105,5){\color{black!40}{\circle*{3}}}
\put(105,15){\color{black!40}{\circle*{3}}}

\put(65,20){\color{black!40}{\circle{20}}}
\put(62,17){\color{black!40}{1}}
\put(60,15){\color{black!40}{\circle*{3}}}
\put(60,25){\color{black!40}{\circle*{3}}}
\put(70,15){\color{black!40}{\circle*{3}}}
\put(70,25){\color{black!40}{\circle*{3}}}

\put(30,40){\color{black!40}{\circle{20}}}
\put(27,37){\color{black!40}{2}}
\put(25,35){\color{black!40}{\circle*{3}}}
\put(25,45){\color{black!40}{\circle*{3}}}
\put(35,35){\color{black!40}{\circle*{3}}}
\put(35,45){\color{black!40}{\circle*{3}}}

\end{picture}}
  \vspace{12pt} 
  
   \scalebox{.65}{
 \begin{picture}(200,180)(0,0)

\put(100,10){\color{black}{\circle{20}}}
\put(97,7){\color{black}{0}}
\put(65,20){\color{black}{\circle{20}}}
\put(62,17){\color{black}{1}}
\put(30,40){\color{black}{\circle{20}}}
\put(27,37){\color{black}{2}}
\put(10,70){\color{black}{\circle{20}}}
\put(7,67){\color{black}{3}}
\put(10,110){\color{black}{\circle{20}}}
\put(7,107){\color{black}{4}}
\put(30,140){\color{black}{\circle{20}}}
\put(27,137){\color{black}{5}}
\put(70,160){\color{black}{\circle{20}}}
\put(67,157){\color{black}{6}}
\put(130,160){\color{black}{\circle{20}}}
\put(127,157){\color{black}{7}}
\put(170,140){\color{black}{\circle{20}}}
\put(167,137){\color{black}{8}}
\put(190,110){\color{black}{\circle{20}}}
\put(187,107){\color{black}{9}}
\put(190,70){\color{black}{\circle{20}}}
\put(185,67){\color{black}{10}}
\put(165,70){\vector(1,0){15}}
\put(139,67){Skip}
\put(170,40){\color{black}{\circle{20}}}
\put(164,37){\color{black}{11}}
\put(135,20){\color{black}{\circle{20}}}
\put(129,17){\color{black}{12}}

\put(70,25){\color{black}{\circle*{3}}}
\put(35,45){\color{black}{\circle*{3}}}
\put(15,75){\color{black}{\circle*{3}}}
\put(15,115){\color{black}{\circle*{3}}}
 
\put(70,25){\color{black}{\circle*{3}}}
\put(35,45){\color{black}{\circle*{3}}}
\put(15,75){\color{black}{\circle*{3}}}
\put(15,115){\color{black}{\circle*{3}}}
\put(75,165){\color{black}{\circle*{3}}}
\put(135,165){\color{black}{\circle*{3}}}
\put(175,145){\color{black}{\circle*{3}}}
\put(195,116){\color{black}{\circle*{3}}}

\put(75,165){\color{black}{\circle*{3}}}
\put(135,165){\color{black}{\circle*{3}}}
\put(175,145){\color{black}{\circle*{3}}}
\put(195,116){\color{black}{\circle*{3}}}
\put(175,46){\color{black}{\circle*{3}}}
\put(140,26){\color{black}{\circle*{3}}}
\put(105,15){\color{black}{\circle*{3}}}
\put(60,25){\color{black}{\circle*{3}}}

\put(175,46){\color{black}{\circle*{3}}}
\put(140,26){\color{black}{\circle*{3}}}
\put(105,15){\color{black}{\circle*{3}}}
\put(60,25){\color{black}{\circle*{3}}}
 \put(5,75){\color{black}{\circle*{3}}}
 \put(5,115){\color{black}{\circle*{3}}}
\put(35,145){\color{black}{\circle*{3}}}
 \put(65,165){\color{black}{\circle*{3}}}

 \put(5,75){\color{black}{\circle*{3}}}
 \put(5,115){\color{black}{\circle*{3}}}
\put(35,145){\color{black}{\circle*{3}}}
 \put(65,165){\color{black}{\circle*{3}}}
\put(165,145){\color{black}{\circle*{3}}}
 \put(185,116){\color{black}{\circle*{3}}}
 \put(195,76){\color{black}{\circle*{3}}}
 \put(165,46){\color{black}{\circle*{3}}}

\put(165,145){\color{black}{\circle*{3}}}
 \put(185,116){\color{black}{\circle*{3}}}
 \put(195,76){\color{black}{\circle*{3}}}
 \put(165,46){\color{black}{\circle*{3}}}

 \put(95,15){\color{black}{\circle*{3}}}
\put(70,15){\color{black}{\circle*{3}}}
\put(25,45){\color{black}{\circle*{3}}}
 \put(15,65){\color{black}{\circle*{3}}}

 \put(95,15){\color{black}{\circle*{3}}}
\put(70,15){\color{black}{\circle*{3}}}
\put(25,45){\color{black}{\circle*{3}}}
 \put(15,65){\color{black}{\circle*{3}}}
 
\put(25,145){\color{black}{\circle*{3}}}
 \put(75,155){\color{black}{\circle*{3}}}
\put(125,165){\color{black}{\circle*{3}}}
 \put(175,135){\color{black}{\circle*{3}}}
 
\put(25,145){\color{black}{\circle*{3}}}
 \put(75,155){\color{black}{\circle*{3}}}
\put(125,165){\color{black}{\circle*{3}}}
 \put(175,135){\color{black}{\circle*{3}}}
\put(185,76){\color{black}{\circle*{3}}}
\put(175,34){\color{black}{\circle*{3}}}
 \put(130,26){\color{black}{\circle*{3}}}
\put(105,5){\color{black}{\circle*{3}}}

\put(185,76){\color{black}{\circle*{3}}}
\put(175,34){\color{black}{\circle*{3}}}
 \put(130,26){\color{black}{\circle*{3}}}
\put(105,5){\color{black}{\circle*{3}}}
\put(35,35){\color{black}{\circle*{3}}}
\put(15,105){\color{black}{\circle*{3}}}
\put(35,135){\color{black}{\circle*{3}}}
\put(5,65){\color{black}{\circle*{3}}}

\put(35,35){\color{black}{\circle*{3}}}
\put(10,70){\color{black!40}{\circle{20}}}
\put(7,67){\color{black!40}{3}}
\put(5,65){\color{black!40}{\circle*{3}}}
\put(5,75){\color{black!40}{\circle*{3}}}
\put(15,65){\color{black!40}{\circle*{3}}}
\put(15,75){\color{black!40}{\circle*{3}}}

\put(15,105){\color{black}{\circle*{3}}}
\put(35,135){\color{black}{\circle*{3}}}
 \put(135,155){\color{black}{\circle*{3}}}
 \put(165,135){\color{black}{\circle*{3}}}
 \put(195,104){\color{black}{\circle*{3}}}
 \put(195,64){\color{black}{\circle*{3}}}

 \put(135,155){\color{black}{\circle*{3}}}
 \put(165,135){\color{black}{\circle*{3}}}
 \put(195,104){\color{black}{\circle*{3}}}
 \put(195,64){\color{black}{\circle*{3}}}
 
\put(170,140){\color{black!40}{\circle{20}}}
\put(167,137){\color{black!40}{8}}
\put(165,135){\color{black!40}{\circle*{3}}}
\put(175,135){\color{black!40}{\circle*{3}}}
\put(165,145){\color{black!40}{\circle*{3}}}
\put(175,145){\color{black!40}{\circle*{3}}}

\put(140,14){\color{black}{\circle*{3}}}
 \put(95,5){\color{black}{\circle*{3}}}
 \put(60,15){\color{black}{\circle*{3}}}
\put(25,35){\color{black}{\circle*{3}}}

\put(140,14){\color{black}{\circle*{3}}}

\put(100,10){\color{black!40}{\circle{20}}}
\put(97,7){\color{black!40}{0}}
\put(95,5){\color{black!40}{\circle*{3}}}
\put(95,15){\color{black!40}{\circle*{3}}}
\put(105,5){\color{black!40}{\circle*{3}}}
\put(105,15){\color{black!40}{\circle*{3}}}

\put(65,20){\color{black!40}{\circle{20}}}
\put(62,17){\color{black!40}{1}}
\put(60,15){\color{black!40}{\circle*{3}}}
\put(60,25){\color{black!40}{\circle*{3}}}
\put(70,15){\color{black!40}{\circle*{3}}}
\put(70,25){\color{black!40}{\circle*{3}}}

\put(30,40){\color{black!40}{\circle{20}}}
\put(27,37){\color{black!40}{2}}
\put(25,35){\color{black!40}{\circle*{3}}}
\put(25,45){\color{black!40}{\circle*{3}}}
\put(35,35){\color{black!40}{\circle*{3}}}
\put(35,45){\color{black!40}{\circle*{3}}}

\put(25,135){\color{black}{\circle*{3}}}
\put(65,155){\color{black}{\circle*{3}}}
\put(125,155){\color{black}{\circle*{3}}}
\put(185,104){\color{black}{\circle*{3}}}

\put(30,140){\color{black!40}{\circle{20}}}
\put(27,137){\color{black!40}{5}}
\put(25,135){\color{black!40}{\circle*{3}}}
\put(25,145){\color{black!40}{\circle*{3}}}
\put(35,135){\color{black!40}{\circle*{3}}}
\put(35,145){\color{black!40}{\circle*{3}}}

\put(70,160){\color{black!40}{\circle{20}}}
\put(67,157){\color{black!40}{6}}
\put(65,155){\color{black!40}{\circle*{3}}}
\put(65,165){\color{black!40}{\circle*{3}}}
\put(75,155){\color{black!40}{\circle*{3}}}
\put(75,165){\color{black!40}{\circle*{3}}}

\put(130,160){\color{black!40}{\circle{20}}}
\put(127,157){\color{black!40}{7}}
\put(125,155){\color{black!40}{\circle*{3}}}
\put(125,165){\color{black!40}{\circle*{3}}}
\put(135,155){\color{black!40}{\circle*{3}}}
\put(135,165){\color{black!40}{\circle*{3}}}

\put(190,110){\color{black!40}{\circle{20}}}
\put(187,107){\color{black!40}{9}}
\put(185,104){\color{black!40}{\circle*{3}}}
\put(185,116){\color{black!40}{\circle*{3}}}
\put(195,104){\color{black!40}{\circle*{3}}}
\put(195,116){\color{black!40}{\circle*{3}}}

 \put(5,105){\color{black}{\circle*{3}}}
 \put(165,34){\color{black}{\circle*{3}}}
 \put(130,14){\color{black}{\circle*{3}}}

\end{picture}
\hspace{1cm}
 \begin{picture}(200,180)(0,0)

\put(100,10){\color{black}{\circle{20}}}
\put(97,7){\color{black}{0}}
\put(65,20){\color{black}{\circle{20}}}
\put(62,17){\color{black}{1}}
\put(30,40){\color{black}{\circle{20}}}
\put(27,37){\color{black}{2}}
\put(10,70){\color{black}{\circle{20}}}
\put(7,67){\color{black}{3}}
\put(10,110){\color{black}{\circle{20}}}
\put(7,107){\color{black}{4}}
\put(30,140){\color{black}{\circle{20}}}
\put(27,137){\color{black}{5}}
\put(70,160){\color{black}{\circle{20}}}
\put(67,157){\color{black}{6}}
\put(130,160){\color{black}{\circle{20}}}
\put(127,157){\color{black}{7}}
\put(170,140){\color{black}{\circle{20}}}
\put(167,137){\color{black}{8}}
\put(190,110){\color{black}{\circle{20}}}
\put(187,107){\color{black}{9}}
\put(190,70){\color{black}{\circle{20}}}
\put(185,67){\color{black}{10}}
\put(170,40){\color{black}{\circle{20}}}
\put(164,37){\color{black}{11}}
\put(135,20){\color{black}{\circle{20}}}
\put(129,17){\color{black}{12}}

\put(70,25){\color{black}{\circle*{3}}}
\put(35,45){\color{black}{\circle*{3}}}
\put(15,75){\color{black}{\circle*{3}}}
\put(15,115){\color{black}{\circle*{3}}}
 
\put(70,25){\color{black}{\circle*{3}}}
\put(35,45){\color{black}{\circle*{3}}}
\put(15,75){\color{black}{\circle*{3}}}
\put(15,115){\color{black}{\circle*{3}}}
\put(75,165){\color{black}{\circle*{3}}}
\put(135,165){\color{black}{\circle*{3}}}
\put(175,145){\color{black}{\circle*{3}}}
\put(195,116){\color{black}{\circle*{3}}}

\put(75,165){\color{black}{\circle*{3}}}
\put(135,165){\color{black}{\circle*{3}}}
\put(175,145){\color{black}{\circle*{3}}}
\put(195,116){\color{black}{\circle*{3}}}
\put(175,46){\color{black}{\circle*{3}}}
\put(140,26){\color{black}{\circle*{3}}}
\put(105,15){\color{black}{\circle*{3}}}
\put(60,25){\color{black}{\circle*{3}}}

\put(175,46){\color{black}{\circle*{3}}}
\put(140,26){\color{black}{\circle*{3}}}
\put(105,15){\color{black}{\circle*{3}}}
\put(60,25){\color{black}{\circle*{3}}}
 \put(5,75){\color{black}{\circle*{3}}}
 \put(5,115){\color{black}{\circle*{3}}}
\put(35,145){\color{black}{\circle*{3}}}
 \put(65,165){\color{black}{\circle*{3}}}

 \put(5,75){\color{black}{\circle*{3}}}
 \put(5,115){\color{black}{\circle*{3}}}
\put(35,145){\color{black}{\circle*{3}}}
 \put(65,165){\color{black}{\circle*{3}}}
\put(165,145){\color{black}{\circle*{3}}}
 \put(185,116){\color{black}{\circle*{3}}}
 \put(195,76){\color{black}{\circle*{3}}}
 \put(165,46){\color{black}{\circle*{3}}}

\put(165,145){\color{black}{\circle*{3}}}
 \put(185,116){\color{black}{\circle*{3}}}
 \put(195,76){\color{black}{\circle*{3}}}
 \put(165,46){\color{black}{\circle*{3}}}

 \put(95,15){\color{black}{\circle*{3}}}
\put(70,15){\color{black}{\circle*{3}}}
\put(25,45){\color{black}{\circle*{3}}}
 \put(15,65){\color{black}{\circle*{3}}}

 \put(95,15){\color{black}{\circle*{3}}}
\put(70,15){\color{black}{\circle*{3}}}
\put(25,45){\color{black}{\circle*{3}}}
 \put(15,65){\color{black}{\circle*{3}}}
 
\put(25,145){\color{black}{\circle*{3}}}
 \put(75,155){\color{black}{\circle*{3}}}
\put(125,165){\color{black}{\circle*{3}}}
 \put(175,135){\color{black}{\circle*{3}}}
 
\put(25,145){\color{black}{\circle*{3}}}
 \put(75,155){\color{black}{\circle*{3}}}
\put(125,165){\color{black}{\circle*{3}}}
 \put(175,135){\color{black}{\circle*{3}}}
\put(185,76){\color{black}{\circle*{3}}}
\put(175,34){\color{black}{\circle*{3}}}
 \put(130,26){\color{black}{\circle*{3}}}
\put(105,5){\color{black}{\circle*{3}}}

\put(185,76){\color{black}{\circle*{3}}}
\put(175,34){\color{black}{\circle*{3}}}
 \put(130,26){\color{black}{\circle*{3}}}
\put(105,5){\color{black}{\circle*{3}}}
\put(35,35){\color{black}{\circle*{3}}}
\put(15,105){\color{black}{\circle*{3}}}
\put(35,135){\color{black}{\circle*{3}}}
\put(5,65){\color{black}{\circle*{3}}}

\put(35,35){\color{black}{\circle*{3}}}
\put(10,70){\color{black!40}{\circle{20}}}
\put(7,67){\color{black!40}{3}}
\put(5,65){\color{black!40}{\circle*{3}}}
\put(5,75){\color{black!40}{\circle*{3}}}
\put(15,65){\color{black!40}{\circle*{3}}}
\put(15,75){\color{black!40}{\circle*{3}}}

\put(15,105){\color{black}{\circle*{3}}}
\put(35,135){\color{black}{\circle*{3}}}
 \put(135,155){\color{black}{\circle*{3}}}
 \put(165,135){\color{black}{\circle*{3}}}
 \put(195,104){\color{black}{\circle*{3}}}
 \put(195,64){\color{black}{\circle*{3}}}

 \put(135,155){\color{black}{\circle*{3}}}
 \put(165,135){\color{black}{\circle*{3}}}
 \put(195,104){\color{black}{\circle*{3}}}
 \put(195,64){\color{black}{\circle*{3}}}
 
\put(170,140){\color{black!40}{\circle{20}}}
\put(167,137){\color{black!40}{8}}
\put(165,135){\color{black!40}{\circle*{3}}}
\put(175,135){\color{black!40}{\circle*{3}}}
\put(165,145){\color{black!40}{\circle*{3}}}
\put(175,145){\color{black!40}{\circle*{3}}}

\put(140,14){\color{black}{\circle*{3}}}
 \put(95,5){\color{black}{\circle*{3}}}
 \put(60,15){\color{black}{\circle*{3}}}
\put(25,35){\color{black}{\circle*{3}}}

\put(140,14){\color{black}{\circle*{3}}}

\put(100,10){\color{black!40}{\circle{20}}}
\put(97,7){\color{black!40}{0}}
\put(95,5){\color{black!40}{\circle*{3}}}
\put(95,15){\color{black!40}{\circle*{3}}}
\put(105,5){\color{black!40}{\circle*{3}}}
\put(105,15){\color{black!40}{\circle*{3}}}

\put(65,20){\color{black!40}{\circle{20}}}
\put(62,17){\color{black!40}{1}}
\put(60,15){\color{black!40}{\circle*{3}}}
\put(60,25){\color{black!40}{\circle*{3}}}
\put(70,15){\color{black!40}{\circle*{3}}}
\put(70,25){\color{black!40}{\circle*{3}}}

\put(30,40){\color{black!40}{\circle{20}}}
\put(27,37){\color{black!40}{2}}
\put(25,35){\color{black!40}{\circle*{3}}}
\put(25,45){\color{black!40}{\circle*{3}}}
\put(35,35){\color{black!40}{\circle*{3}}}
\put(35,45){\color{black!40}{\circle*{3}}}

\put(25,135){\color{black}{\circle*{3}}}
\put(65,155){\color{black}{\circle*{3}}}
\put(125,155){\color{black}{\circle*{3}}}
\put(185,104){\color{black}{\circle*{3}}} 


\put(30,140){\color{black!40}{\circle{20}}}
\put(27,137){\color{black!40}{5}}
\put(25,135){\color{black!40}{\circle*{3}}}
\put(25,145){\color{black!40}{\circle*{3}}}
\put(35,135){\color{black!40}{\circle*{3}}}
\put(35,145){\color{black!40}{\circle*{3}}}

\put(70,160){\color{black!40}{\circle{20}}}
\put(67,157){\color{black!40}{6}}
\put(65,155){\color{black!40}{\circle*{3}}}
\put(65,165){\color{black!40}{\circle*{3}}}
\put(75,155){\color{black!40}{\circle*{3}}}
\put(75,165){\color{black!40}{\circle*{3}}}

\put(130,160){\color{black!40}{\circle{20}}}
\put(127,157){\color{black!40}{7}}
\put(125,155){\color{black!40}{\circle*{3}}}
\put(125,165){\color{black!40}{\circle*{3}}}
\put(135,155){\color{black!40}{\circle*{3}}}
\put(135,165){\color{black!40}{\circle*{3}}}

\put(190,110){\color{black!40}{\circle{20}}}
\put(187,107){\color{black!40}{9}}
\put(185,104){\color{black!40}{\circle*{3}}}
\put(185,116){\color{black!40}{\circle*{3}}}
\put(195,104){\color{black!40}{\circle*{3}}}
\put(195,116){\color{black!40}{\circle*{3}}}

 \put(5,105){\color{black}{\circle*{3}}}
 \put(165,34){\color{black}{\circle*{3}}}
 \put(130,14){\color{black}{\circle*{3}}}


\put(170,40){\color{black!40}{\circle{20}}}
\put(164,37){\color{black!40}{11}}
\put(165,34){\color{black!40}{\circle*{3}}}
\put(165,46){\color{black!40}{\circle*{3}}}
\put(175,34){\color{black!40}{\circle*{3}}}
\put(175,46){\color{black!40}{\circle*{3}}}
\put(135,20){\color{black!40}{\circle{20}}}
\put(129,17){\color{black!40}{12}}
\put(130,14){\color{black!40}{\circle*{3}}}
\put(130,26){\color{black!40}{\circle*{3}}}
\put(140,14){\color{black!40}{\circle*{3}}}
\put(140,26){\color{black!40}{\circle*{3}}}
\put(10,110){\color{black!40}{\circle{20}}}
\put(7,107){\color{black!40}{4}}
\put(5,105){\color{black!40}{\circle*{3}}}
\put(5,115){\color{black!40}{\circle*{3}}}
\put(15,105){\color{black!40}{\circle*{3}}}
\put(15,115){\color{black!40}{\circle*{3}}}

\put(190,70){\color{black}{\circle{20}}}
\put(185,67){\color{black}{10}}
\put(195,64){\color{black}{\circle*{3}}}
\put(185,76){\color{black}{\circle*{3}}}
\put(195,76){\color{black}{\circle*{3}}}

\end{picture}
}
\caption{ First and Last Three Rounds } \label{fig:slaughter}
\end{center}
\end{figure}
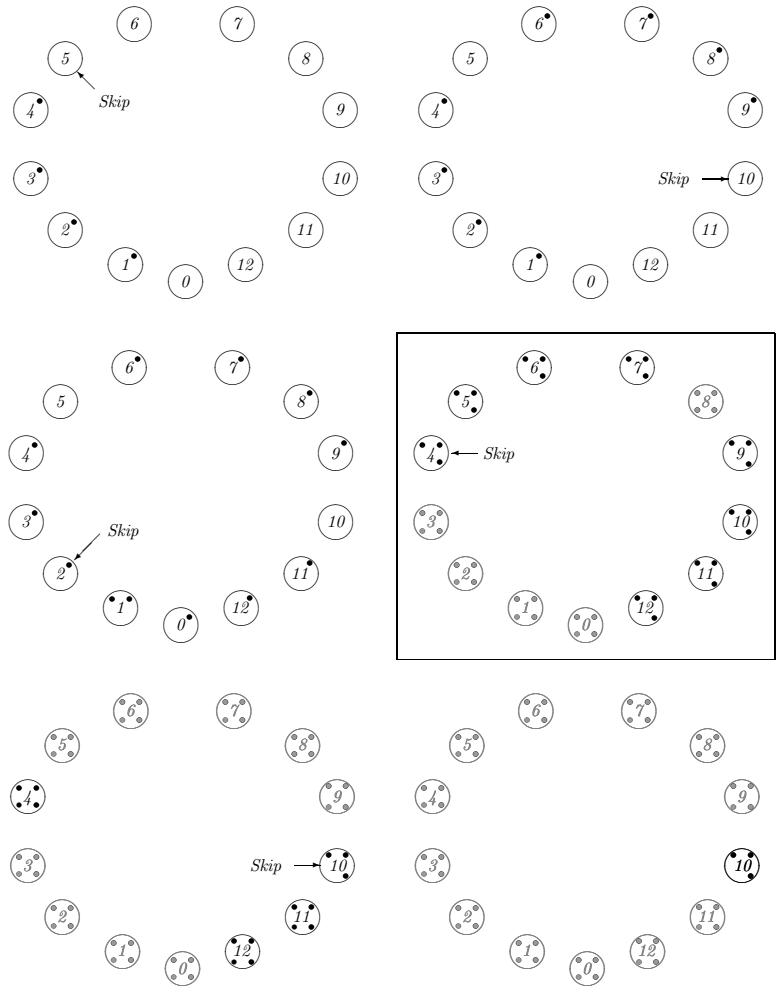

\newpage 
When each surviving soldier has only one life left to live (as highlighted in the box), 
there are eight soldiers remaining. We relabel the soldiers as in Figure \ref{fig:relabel} and use Theorem 
\ref{onelife} to predict the position of the surviving soldier to be position $5$.  
Finally, we reverse the relabeling to see that the soldier starting in position $10$ survives the game.
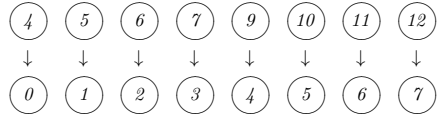
\begin{figure}[h]
\begin{center}
\scalebox{.7}{
 \begin{picture}(240,60)(0,0)
\put(10,50){\color{black}{\circle{20}}}
\put(7,47){\color{black}{4}}
\put(40,50){\color{black}{\circle{20}}}
\put(37,47){\color{black}{5}}
\put(70,50){\color{black}{\circle{20}}}
\put(67,47){\color{black}{6}}
\put(100,50){\color{black}{\circle{20}}}
\put(97,47){\color{black}{7}}

\put(130,50){\color{black}{\circle{20}}}
\put(127,47){\color{black}{9}}
\put(160,50){\color{black}{\circle{20}}}
\put(154,47){\color{black}{10}}
\put(190,50){\color{black}{\circle{20}}}
\put(184,47){\color{black}{11}}
	
\put(220,50){\color{black}{\circle{20}}}
\put(214,47){\color{black}{12}}
\put(7,27){$\downarrow$}
\put(37,27){$\downarrow$}
\put(67,27){$\downarrow$}
\put(97,27){$\downarrow$}
\put(127,27){$\downarrow$}
\put(157,27){$\downarrow$}
\put(187,27){$\downarrow$}
\put(217,27){$\downarrow$}
\put(10,10){\color{black}{\circle{20}}}
\put(7,7){\color{black}{0}}
\put(40,10){\color{black}{\circle{20}}}
\put(37,7){\color{black}{1}}
\put(70,10){\color{black}{\circle{20}}}
\put(67,7){\color{black}{2}}
\put(100,10){\color{black}{\circle{20}}}
\put(97,7){\color{black}{3}}

\put(130,10){\color{black}{\circle{20}}}
\put(127,7){\color{black}{4}}
\put(160,10){\color{black}{\circle{20}}}
\put(157,7){\color{black}{5}}
\put(190,10){\color{black}{\circle{20}}}
\put(187,7){\color{black}{6}}
\put(220,10){\color{black}{\circle{20}}}
\put(217,7){\color{black}{7}}
	
\end{picture}}
\end{center}
\caption{Relabeling Remaining Soldiers} \label{fig:relabel}
\end{figure}
\end{example}

\subsection{Algorithm for $\ell<k$:}

When $n$ and $k+1$ are relatively prime, at no point will the number of lives of any two soldiers differ by more than one. So when $\ell<k$, there will be a point where every soldier has exactly one life, similar to the $\ell=k$ case. The following algorithm gives this set of soldiers.

\begin{itemize}
 \item Let $S_1= \{ t : 0\leq t\leq \ell \cdot n -1 $ and $t\equiv 0 \bmod{(k+1)}\} $. 
 \item Let $S_2 = \{t \bmod{n}: t \in S_1 \}$. 
 \item Let $L = [\ell n -  (\ell n \mod (k+1)) ] \mod n$. 
 \item Finally, let $S$ be $S_2$ without the first $k-(n-L)$ members. 
 Every member of $S$ will have one life. 
\end{itemize}

In the $\ell=k$ case, we then relabeled the soldiers, used Theorem \ref{onelife}, and reversed the relabeling. When $\ell<k$, the relabeling map is not as simple, and so a simple formula is not expected here.

\begin{example}
When $n=13$, $\ell =3$, and $k=4$ then the algorithm gives the following set of soldiers with one life 
remaining as in Figure \ref{fig:slaughterK} on the next page.

\begin{center}
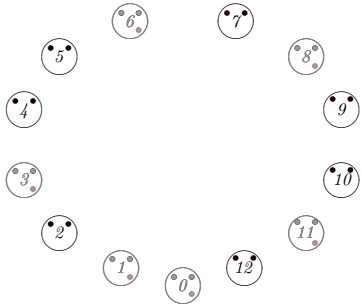
\begin{figure}[h] 
\scalebox{.6666}{
 \begin{picture}(200,170)(0,0)
\put(100,10){\color{black}{\circle{20}}}
\put(97,7){\color{black}{0}}
\put(65,20){\color{black}{\circle{20}}}
\put(62,17){\color{black}{1}}
\put(30,40){\color{black}{\circle{20}}}
\put(27,37){\color{black}{2}}
\put(10,70){\color{black}{\circle{20}}}
\put(7,67){\color{black}{3}}
\put(10,110){\color{black}{\circle{20}}}
\put(7,107){\color{black}{4}}
\put(30,140){\color{black}{\circle{20}}}
\put(27,137){\color{black}{5}}
\put(70,160){\color{black}{\circle{20}}}
\put(67,157){\color{black}{6}}
\put(130,160){\color{black}{\circle{20}}}
\put(127,157){\color{black}{7}}
\put(170,140){\color{black}{\circle{20}}}
\put(167,137){\color{black}{8}}
\put(190,110){\color{black}{\circle{20}}}
\put(187,107){\color{black}{9}}
\put(190,70){\color{black}{\circle{20}}}
\put(185,67){\color{black}{10}}
\put(170,40){\color{black}{\circle{20}}}
\put(164,37){\color{black}{11}}
\put(135,20){\color{black}{\circle{20}}}
\put(129,17){\color{black}{12}}

\put(70,25){\color{red}{\circle*{3}}}
\put(35,45){\color{red}{\circle*{3}}}
\put(15,75){\color{red}{\circle*{3}}}
\put(15,115){\color{red}{\circle*{3}}}
 
\put(70,25){\color{black}{\circle*{3}}}
\put(35,45){\color{black}{\circle*{3}}}
\put(15,75){\color{black}{\circle*{3}}}
\put(15,115){\color{black}{\circle*{3}}}
\put(75,165){\color{red}{\circle*{3}}}
\put(135,165){\color{red}{\circle*{3}}}
\put(175,145){\color{red}{\circle*{3}}}
\put(195,116){\color{red}{\circle*{3}}}

\put(75,165){\color{black}{\circle*{3}}}
\put(135,165){\color{black}{\circle*{3}}}
\put(175,145){\color{black}{\circle*{3}}}
\put(195,116){\color{black}{\circle*{3}}}
\put(175,46){\color{red}{\circle*{3}}}
\put(140,26){\color{red}{\circle*{3}}}
\put(105,15){\color{red}{\circle*{3}}}
\put(60,25){\color{red}{\circle*{3}}}

\put(175,46){\color{black}{\circle*{3}}}
\put(140,26){\color{black}{\circle*{3}}}
\put(105,15){\color{black}{\circle*{3}}}
\put(60,25){\color{black}{\circle*{3}}}
 \put(5,75){\color{red}{\circle*{3}}}
 \put(5,115){\color{red}{\circle*{3}}}
\put(35,145){\color{red}{\circle*{3}}}
 \put(65,165){\color{red}{\circle*{3}}}

 \put(5,75){\color{black}{\circle*{3}}}
 \put(5,115){\color{black}{\circle*{3}}}
\put(35,145){\color{black}{\circle*{3}}}
 \put(65,165){\color{black}{\circle*{3}}}
\put(165,145){\color{red}{\circle*{3}}}
 \put(185,116){\color{red}{\circle*{3}}}
 \put(195,76){\color{red}{\circle*{3}}}
 \put(165,46){\color{red}{\circle*{3}}}

\put(165,145){\color{black}{\circle*{3}}}
 \put(185,116){\color{black}{\circle*{3}}}
 \put(195,76){\color{black}{\circle*{3}}}
 \put(165,46){\color{black}{\circle*{3}}}

 \put(95,15){\color{red}{\circle*{3}}}
\put(70,15){\color{red}{\circle*{3}}}
\put(25,45){\color{red}{\circle*{3}}}
 \put(15,65){\color{red}{\circle*{3}}}

\put(95,15){\color{black}{\circle*{3}}}
\put(25,45){\color{black}{\circle*{3}}}

\put(65,20){\color{black!40}{\circle{20}}}
\put(62,17){\color{black!40}{1}}
\put(70,15){\color{black!40}{\circle*{3}}}
\put(60,25){\color{black!40}{\circle*{3}}}
\put(70,25){\color{black!40}{\circle*{3}}}

\put(10,70){\color{black!40}{\circle{20}}}
\put(7,67){\color{black!40}{3}}
\put(15,65){\color{black!40}{\circle*{3}}}
\put(15,75){\color{black!40}{\circle*{3}}}
\put(5,75){\color{black!40}{\circle*{3}}}
 
\put(25,145){\color{red}{\circle*{3}}}
 \put(75,155){\color{red}{\circle*{3}}}
\put(125,165){\color{red}{\circle*{3}}}
 \put(175,135){\color{red}{\circle*{3}}}

\put(25,145){\color{black}{\circle*{3}}}
\put(125,165){\color{black}{\circle*{3}}}

%
\put(70,160){\color{black!40}{\circle{20}}}
\put(67,157){\color{black!40}{6}}
\put(75,155){\color{black!40}{\circle*{3}}}
\put(75,165){\color{black!40}{\circle*{3}}}
\put(65,165){\color{black!40}{\circle*{3}}}

\put(170,140){\color{black!40}{\circle{20}}}
\put(167,137){\color{black!40}{8}}
\put(175,135){\color{black!40}{\circle*{3}}}
\put(165,145){\color{black!40}{\circle*{3}}}
\put(175,145){\color{black!40}{\circle*{3}}}

\put(185,76){\color{red}{\circle*{3}}}
\put(175,34){\color{red}{\circle*{3}}}
 \put(130,26){\color{red}{\circle*{3}}}
\put(105,5){\color{red}{\circle*{3}}}
%

\put(185,76){\color{black}{\circle*{3}}}

\put(130,26){\color{black}{\circle*{3}}}


\put(170,40){\color{black!40}{\circle{20}}}
\put(164,37){\color{black!40}{11}}
\put(175,34){\color{black!40}{\circle*{3}}}
\put(165,46){\color{black!40}{\circle*{3}}}
\put(175,46){\color{black!40}{\circle*{3}}}

\put(100,10){\color{black!40}{\circle{20}}}
\put(97,7){\color{black!40}{0}}
\put(105,5){\color{black!40}{\circle*{3}}}
\put(95,15){\color{black!40}{\circle*{3}}}
\put(105,15){\color{black!40}{\circle*{3}}}
 
\end{picture} }
\caption{Soldiers with one life remaining} \label{fig:slaughterK}
\end{figure}

\end{center}
\end{example}

\section{Open Problems}
We have only begun to scratch the surface of this variant, and so we list some interesting problems and questions that can be considered.

\begin{enumerate}
\item Given $k$, can we always find a non-trivial $n$ 
such that the same position survives for any 
$\ell$? This problem is similar to Ruskey and Williams's first main result \cite{RW12}. 
(For example, when $k=4$, let $n=13$ and the soldier labeled 10 survives for any $\ell$.) 
\item When $\gcd(n,k+1)=1$ and $k>n$, it appears that $T(n,k,k)=n$. 
\item We have ideas for reducing $\ell$ from $k$, but what about small $\ell$, i.e. $\ell=2$? 

\item This paper considers only when $n$ and $k+1$ are relatively prime in the Feline variant, but what about when $\gcd(n,k+1)\neq1$?
\end{enumerate}

\bibliography{JosephusBib} 

\bibliographystyle{plain}

\end{document}